\theoremstyle{definition}
\theoremstyle{definition}
\newcounter{dummy} \numberwithin{dummy}{section}
	\newtheorem{theorem}[dummy]{Theorem}
	\newtheorem{remark}[dummy]{Remark}
	\newtheorem{corollary}[dummy]{Corollary}
\begin{document}
\title{Identities for Catalan's constant arising from integrals depending on a parameter}

\author{Federica Ferretti\thanks{Free University of Bozen\textcolor{blue}{\tt\, Federica.Ferretti@unibz.it}}\,\,Alessandro Gambini\thanks{Sapienza Universit\`{a} di Roma\textcolor{blue}{\tt \,alessandro.gambini@uniroma1.it}}\,\,and\,Daniele Ritelli\thanks{Dipartimento di Scienze Statistiche Universit\`{a} di  Bologna \textcolor{blue}{\tt \,daniele.ritelli@unibo.it}}}
\date{}
\maketitle

\begin{abstract}
In this paper we provide some relationships between Catalan's constant and the $_3{\rm F}_2$ and $_4{\rm F}_3$ hypergeometric functions, deriving them from some parametric integrals. In particular, using the complete elliptic integral of the first kind, we found an alternative proof of a result of Ramanujan for $_3{\rm F}_2$, a second identity related to $_4{\rm F}_3$ and using the complete elliptic integral of the second kind we obtain an identity by Adamchik.

\vspace{.5cm}

{\sc Keyword:} Catalan constant,\, elliptic integral,\, hypergeometric functions.  

\vspace{.5cm}

{\sc AMS subject classification} 33C75,\,33C20



\end{abstract}

\section{Introduction}

Catalan's constant $G$ was defined by Eug\`{e}ne Charles Catalan, who introduced this constant in \cite[eq. (4) p. 23]{catalan0} as the alternating series
\begin{align*}\label{G0}
G=\sum_{n=0}^{\infty}\frac{(-1)^n}{(2n+1)^2}.
\end{align*}
It is well-known, \cite[eq. (1) p. 1]{catalan}, that we may also express this constant with the following definite integral
\[
\int_{0}^1\frac{\arctan x}{x}{\rm d}x.
\]
This integral has stimulated the interest of Ramanujan, \cite{integral}.
$G$ was also identified by James W.L. Glaisher in 1877, see \cite{Glaisher}. Its value is approximately $G\cong 0.915965594177\dots$ and actually it is not known if it is a rational number. 
The constant $G$ is somewhat ubiquitous since it appears in many 
occurrences connected to definite integrals or series summations; we give a (surely) non exhaustive list of papers that provide several interesting occurrences of $G$, including \cite{adamchik, alkan, bradley, interplay, choi, Nick, deriving} and \cite{yang}. There are also interesting connections between Catalan's constant and the Clausen's integrals, that is
\[
\mathrm{Cl}_2(\theta)= -\int_0^{\theta}\log\left(2 \, \sin\frac{t}{2}\right) \, {\rm d}t,
\]
the Hurwitz zeta function, 
Euler-Mascheroni constant and many other special functions, as one can see for instance in \cite{interplay}, \cite{choi}, \cite{yang}.
The starting point of this paper is the connection of $G$ with the complete elliptic integrals of the first
and second kind, namely
\[
G=\frac12\int_0^1K(s)\,{\rm d}s,\quad\text{and}\quad G=\int_0^1E(s)\,{\rm d}s-\frac12
\]
where the two complete elliptic integrals for $|s|\leq1$ are given by:
\[
K(s)=\int_0^1\frac{{\rm d}x}{\sqrt{(1-x^2)(1-s^2x^2)}},\quad 
E(s)=\int_0^1\sqrt{\frac{1-s^2x^2}{1-x^2}}\,{\rm d}x. 
\]
The basic idea of this paper is to arrive at well-known formulas using Catalan's constant through Feynman's favourite technique of differentation under the integral sign. We can express $G$ in terms of the hypergeometric function $_3{\rm F}_2$, providing in Section \ref{due} an 
alternative proof of the representation of $G$ in terms of $_3{\rm F}_2$, due to Ramanujan: in his second notebook Chapter 10, as reported by Berndt, 
\cite[p. 40 eq. (29.3)]{berndt}, the following relation is established:
\begin{equation}\label{prodigi}
\frac{4G}{\pi}=\,_{3}\mathrm{F}_{2}\left(  
\begin{array}{c}
1/2,1/2,1/2 \\[2mm]
1,3/2
\end{array}
;1\right).
\end{equation}
We deem interesting provide some details of the Ramanujan's way to formula (\ref{prodigi}), in order to emphasize how our approach requires a less sophisticated, and very different, mathematical machinery. Equation (\ref{prodigi}) in \cite[Entry 29(d) p. 40]{berndt} is obtained as a particular case of the following $_3{\rm F}_2$ identity (\ref{prodigii}) which holds true for $n>-\frac32$
\begin{equation}\label{prodigii}
\,_{3}\mathrm{F}_{2}\left(  
\begin{array}{c}
1/2,1,n+3/2 \\[2mm]
3/2,n+2
\end{array}
;1\right)=\sqrt{\pi}\,\frac{\Gamma(n+2)}{\Gamma(n+\frac32)}\,_{3}\mathrm{F}_{2}\left(  
\begin{array}{c}
1/2,1/2,-n \\[2mm]
1,3/2
\end{array}
;1\right).
\end{equation}
In fact taking the particular value $n=-\frac12$ equation (\ref{prodigii}) furnishes
\begin{equation}\label{prodigiii}
\,_{3}\mathrm{F}_{2}\left(  
\begin{array}{c}
1/2,1,1\\[2mm]
3/2,3/2
\end{array}
;1\right)=\frac{\pi}{2}\,_{3}\mathrm{F}_{2}\left(  
\begin{array}{c}
1/2,1/2,1/2 \\[2mm]
1,3/2
\end{array}
;1\right).
\end{equation}
The left-hand side of (\ref{prodigiii}) is evaluated in the first Ramanujan's notebook: in fact using the Whipple quadratic transformation type identity, see for instance \cite[p.190]{erdely}, given in \cite[Entry 32, Eq. (32.2)  p.288]{berndt1}, which we express using the $_3{\rm F}_2$ notation and recalling that (\ref{prodigiv}) holds true for $|x|\leq1$:
\begin{equation}\label{prodigiv}
\,_{3}\mathrm{F}_{2}\left(  
\begin{array}{c}
1/2,1,1\\[2mm]
3/2,3/2
\end{array}
;\frac{4x}{(1+x)^2}\right)=(1+x)\sum_{n=0}^{\infty}\frac{(-x)^n}{(2n+1)^2};
\end{equation}
taking $x=1$ in (\ref{prodigiv}) we see that the left-hand side of (\ref{prodigiii}) is exactly $2G$ which completes the illustration of the Ramanujan approach to (\ref{prodigi}).

Finally, regarding formula (\ref{prodigi}), it is also worth mentioning the contribution of \cite{sriva}.

In  Section \ref{due} we also present (always starting from a suitable parametric integral) two representations of $G$ that are interrelated with Euler's log-sine and log-cosine integrals, see \cite[Section 990]{rober}  or the most recent \cite[Section 2.4 pages 64-65]{nahin}:
\[
 \int_{0}^{\frac{\pi}{2}}\log\left(\sin x\right){\rm d}x=\int_{0}^{\frac{\pi}{2}}\log\left(\cos x\right){\rm d}x=-\frac{\pi}{2}\,\log 2.
\]
In Section \ref{tre}, we present a further hypergeometric connection of $G$ with the complete elliptic integral of second kind, using its $_2{\rm F}_1$ representation
\begin{equation}\label{repre}
E(k)=\frac{\pi}{2}\,_{2}\mathrm{F}_{1}\left(  
\begin{array}{c}
-1/2,1/2 \\[2mm]
1
\end{array}
;k\right).
\end{equation}

\section{Catalan's constant from complete elliptic integral of the first kind}\label{due}

We recall that $_{3}\mathrm{F}_{2}$ and $_{4}\mathrm{F}_{3}$ are the hypergeometric generalized function, defined for $|x|<1$ by the power series
\[
\begin{split}
_3\mathrm{F}_{2}\left( 
\begin{array}{c}
a_1,a_2,a_3 \\ 
b_1,b_2
\end{array}
; x\right)
&=\sum_{k=0}^{\infty}\frac{(a_1)_k\,(a_2)_k\,(a_3)_k}{(b_1)_k\,(b_2)_k}\frac{x^k}{k!},\\
 _4\mathrm{F}_{3}\left( 
\begin{array}{c}
a_1,a_2,a_3,a_4 \\ 
b_1,b_2,b_3
\end{array}
; x\right)
&=\sum_{k=0}^{\infty}\frac{(a_1)_k\,(a_2)_k\,(a_3)_k\,(a_4)_k}{(b_1)_k\,(b_2)_k\,(b_3)_k}\frac{x^k}{k!}
\end{split}
\]
where $(a)_k$ is the standard notation for the Pochhammer symbol (increasing factorial):
\[
(a)_k=\frac{\Gamma(a+k)}{\Gamma(a)}=a(a+1)\cdots(a+k-1).
\]
It should be remembered that $_{3}\mathrm{F}_{2}$ converges at $\,x=1\,$ whenever $\,{\rm Re}{\left(b_1+b_2-a_1-a_2-a_3\right)} > 0\,$ and $_{4}\mathrm{F}_{3}$ converges at $\,x=1\,$ whenever $\,{\rm Re}{\left(b_1+b_2+b_3-a_1-a_2-a_3-a_4\right)} > 0.$ See Section~2.2 page 45 of~\cite{slater}.

The following Theorem \ref{uno} provides an integration formula which is related to Lemma 1 of \cite{lima}, but here the result is obtained by means of elementary techniques. 
\begin{remark}
Formula \eqref{e1} as given in Theorem \ref{uno} below, is not found in the classical repertories, for instance in Sections 4.52-4.59 pp. 600-607 of \cite{gradshteyn} it does not appear, but since it can be evaluated using Mathematica$_\text{\textregistered}$ this means that it can be obtained using known formulas. Our elementary approach, which is based on the use of differentiation under the integral sign, gives us a rigorous proof of this identity.
\end{remark}
\begin{theorem}\label{uno}
The following formula for $|s|<1$ holds:
\begin{equation}\label{e1}
\int_0^1\frac{\arcsin (sx)}{x\sqrt{1-x^2}}{\rm d}x=\frac{\pi}{2}s\;_{3}\mathrm{F}_{2}\left(  
\begin{array}{c}
1/2,1/2,1/2 \\[2mm]
1,3/2
\end{array}
;s^2\right).
\end{equation}
\end{theorem}

\begin{proof}
Let
\[
A(s):=\int_0^1\frac{\arcsin(sx)}{x\sqrt{1-x^2}}{\rm d}x.
\]
Observe that, since
\[
\lim_{x\to0}\frac{\arcsin(sx)}{x}=s
\]
we have, being $|s|<1$:
\[
\left|\frac{\arcsin(sx)}{x\sqrt{1-x^2}}\right|\leq\frac{1}{\sqrt{1-x^2}}
\]
thus we can differentiate under the integral sign to reach
\[
A'(s)=\int_0^1\frac{{\rm d}x}{\sqrt{(1-x^2)(1-s^2x^2)}}=K(s).
\]
Then, observing that $A(0)=0$ we infer
\begin{equation}\label{ant}
A(s)=\int_0^sK(k){\rm d}k.
\end{equation}
To end our proof, we simply need to integrate term by term \eqref{ant} using the hypergeometric series representation of the first kind complete elliptic integral $K(k)$
\[
\begin{split}
A(s)&=\frac{\pi}{2}\int_{0}^{s}\, _2\mathrm{F}_{1}\left( 
\begin{array}{c}
1/2,1/2 \\ 
1
\end{array}
; k^2\right){\rm d}k=\frac{\pi}{2}\int_{0}^{s}\sum_{m=0}^{\infty }\frac{\left( 1/2\right) _{m}\left(
1/2\right) _{m}}{\left( 1\right) _{m}}\frac{k^{2m}}{m!}{\rm d}k\\
&=\frac{\pi}{2}\sum_{m=0}^{\infty }\frac{\left( 1/2\right) _{m}\left(
1/2\right) _{m}}{\left( 1\right) _{m}(2m+1)}\frac{s^{2m+1}}{m!}=\frac{\pi}{2}s\sum_{m=0}^{\infty }\frac{\left( 1/2\right) _{m}\left(
1/2\right) _{m}\left( 1/2\right) _{m}}{\left( 1\right) _{m}\left( 3/2\right) _{m}}\frac{s^{2m}}{m!}
\end{split}\]
where we used the identity
\begin{equation}\label{idee}
\frac{\left( 1/2\right) _{m}}{\left( 3/2\right) _{m}}=\frac{1}{1+2m}.
\end{equation}
\end{proof}

In this way we obtained an alternative proof of the Ramanujan's formula \eqref{prodigi}: in fact from \eqref{ant} and \cite{Byrd} entry 615.01 p. 274, we deduce that 
\begin{equation}\label{A(1)}
A(1)=\frac{\pi}{2}\,_{3}\mathrm{F}_{2}\left(  
\begin{array}{c}
1/2,1/2,1/2 \\[2mm]
1,3/2
\end{array}
;1\right)
=\int_0^1\frac{\arcsin x}{x\sqrt{1-x^2}}{\rm d}x=2G.
\end{equation} 
It is worth noting that the underlying integral representation for $G$
\[
G=\frac12\int_0^1\frac{\arcsin t}{t\sqrt{1-t^2}}{\rm d}t
\]
is also used in \cite[p. 161]{bradley}. Formula \eqref{prodigi} stems from \eqref{e1} taking the limit $s\to1^{-}$. Identity \eqref{prodigi} is also obtained, using a different technique, by Borwein et el. \cite{borwein} Section 3 when $s=0$.

Following the same method, we are able to find two other known identities in a simple way. Consider the parametric integral
\begin{equation}\label{cds}
C(s):=\int_0^1\frac{\log\left(1+\sqrt{1-s^2x^2}\right)}{\sqrt{1-x^2}}\,{\rm d}x.
\end{equation}
We have that
\begin{theorem}\label{zeero}
For any $|s|<1$
\begin{equation}\label{eics}
C(s)=\frac{\pi}{2}\,\log2-\frac{\pi}{16}\,s^2\,_{4}\mathrm{F}_{3}\left(  
\begin{array}{c}
1,1,3/2,3/2 \\[2mm]
2,2,2
\end{array}
;s^2\right).
\end{equation}
\end{theorem}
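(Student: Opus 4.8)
The plan is to reproduce the pattern of the proof of Theorem~\ref{uno}: fix the value at $s=0$, differentiate under the integral sign to express $C'(s)$ cleanly through $K(s)$, and then integrate the hypergeometric expansion of $K$ term by term. I start by evaluating $C(0)$: at $s=0$ the integrand collapses to $\log 2/\sqrt{1-x^2}$, so $C(0)=\frac{\pi}{2}\log 2$, which is precisely the constant appearing in~\eqref{eics}.

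To differentiate under the integral sign I restrict $s$ to a compact subinterval $[-s_1,s_1]\subset(-1,1)$. There $\sqrt{1-s^2x^2}\ge\sqrt{1-s_1^2}$ and $1+\sqrt{1-s^2x^2}\ge1$, so the $s$-derivative of the integrand is dominated by
\[
\frac{s_1}{\sqrt{1-s_1^2}}\,\frac{1}{\sqrt{1-x^2}},
\]
which is integrable on $[0,1]$ and independent of $s$; this justifies the interchange. Differentiating and then \emph{rationalizing} the factor $1/(1+\sqrt{1-s^2x^2})$ — the step I expect to be the true crux — is what makes the computation collapse: writing $1/(1+\sqrt{1-s^2x^2})=(1-\sqrt{1-s^2x^2})/(s^2x^2)$ turns the integrand into $s^{-2}\big((1-s^2x^2)^{-1/2}-1\big)(1-x^2)^{-1/2}$, so that
\[
C'(s)=-s\int_0^1\frac{x^2\,{\rm d}x}{\sqrt{1-x^2}\,\big(1+\sqrt{1-s^2x^2}\,\big)\sqrt{1-s^2x^2}}=\frac1s\left(\frac{\pi}{2}-K(s)\right),
\]
the two surviving pieces being $K(s)$ and $\int_0^1(1-x^2)^{-1/2}\,{\rm d}x=\frac{\pi}{2}$.

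It remains to integrate. Using $C(0)=\frac{\pi}{2}\log 2$ together with $K(k)=\frac{\pi}{2}\sum_{m\ge0}\frac{\big((1/2)_m\big)^2}{(1)_m}\frac{k^{2m}}{m!}$, the $m=0$ term of $K$ cancels the $\pi/2$, leaving
\[
\frac1k\left(\frac{\pi}{2}-K(k)\right)=-\frac{\pi}{2}\sum_{m\ge1}\frac{\big((1/2)_m\big)^2}{(1)_m\,m!}\,k^{2m-1},
\]
and term-by-term integration (licit since the series converges uniformly on compacta of $(-1,1)$) yields
\[
C(s)=\frac{\pi}{2}\log 2-\frac{\pi}{2}\sum_{m\ge1}\frac{\big((1/2)_m\big)^2}{(1)_m\,m!\,2m}\,s^{2m}.
\]
The last obstacle is then mere bookkeeping on Pochhammer symbols: putting $m=n+1$ and using $(1/2)_{n+1}=\frac{1}{2}(3/2)_n$, $(1)_{n+1}=(2)_n=(n+1)!$ and the identity $\big((n+1)!\big)^2(n+1)=\big((n+1)!\big)^3/n!$, one checks that the coefficient of $s^{2n+2}$ equals $-\frac{\pi}{16}\frac{\big((1)_n\big)^2\big((3/2)_n\big)^2}{\big((2)_n\big)^3\,n!}$, which is exactly the coefficient of $s^{2n+2}$ in $-\frac{\pi}{16}\,s^2\,{}_{4}\mathrm{F}_{3}\big(1,1,3/2,3/2;2,2,2;s^2\big)$. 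This establishes~\eqref{eics}.
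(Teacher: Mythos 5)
Your proposal is correct and follows essentially the same route as the paper's proof: differentiate under the integral sign to obtain $C'(s)=\frac1s\left(\frac{\pi}{2}-K(s)\right)$, fix the constant via $C(0)=\frac{\pi}{2}\log 2$, integrate the hypergeometric series for $K$ term by term, and reindex with the Pochhammer identities $(1/2)_{n+1}=\frac12(3/2)_n$ and $(2)_n=(n+1)!$ to recognize the $_4\mathrm{F}_3$. In fact you supply two details the paper leaves implicit — the dominated-convergence bound justifying the differentiation and the rationalization $1/(1+\sqrt{1-s^2x^2})=(1-\sqrt{1-s^2x^2})/(s^2x^2)$ behind the phrase ``after some computations'' — so your write-up is a faithful, slightly more explicit version of the same argument.
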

\begin{proof}
Again, we differentiate with respect to $s$. Of course, this is admissible, and after some computations, we arrive at the following:
\begin{equation}\label{eics1}
C'(s)=\frac{1}{s}\left(\int_0^1\frac{{\rm d}x}{\sqrt{1-x^2}}-\int_0^1\frac{{\rm d}x}{\sqrt{(1-x^2)(1-s^2x^2)}}\right)=\frac{1}{s}\left(\frac{\pi}{2}-K(s)\right).
\end{equation}
We integrate (\ref{eics1}) by series. First we fix the easiest constant of integration observing that 
\[
C(0)=\int_0^1\frac{\log2}{\sqrt{1-x^2}}\,{\rm d}x=\frac{\pi}{2}\,\log2.
\]
Then, representing as before the complete elliptic integral of the first kind in terms of Gauss hypergeometric series, we can write (\ref{eics1}) as
\begin{equation}\label{eics2}
C'(s)=-\frac{\pi}{2}\sum_{m=1}^\infty\frac{\left( 1/2\right) _{m}^2}{(m!)^2}s^{2m-1}.
\end{equation}
Thus, integrating (\ref{eics2}) in $[0,s],\,s<1$ we obtain
\begin{equation}\label{eics3}
C(s)=C(0)-\frac{\pi}{2}\sum_{m=1}^\infty\frac{\left( 1/2\right) _{m}^2}{(m!)^2}\frac{s^{2m}}{2m}.
\end{equation}
In order to reach the formula (\ref{eics}) we use, of course, the former computation of $C(0)$ and we change index in the series at the right-hand side of (\ref{eics3}) obtaining
\begin{equation}\label{eics4}
\sum_{m=1}^\infty\frac{\left( 1/2\right) _{m}^2}{(m!)^2}\frac{s^{2m}}{2m}=\frac{s^2}{2}
\sum_{n=0}^\infty\frac{\left( 1/2\right) _{n+1}^2}{((n+1)!)^2}\frac{s^{2n}}{(n+1)}.
\end{equation}
Now using the identity, see \cite[eq. (5) page 9]{carlson} 
\[
(a)_{n+1}=a\,(a+1)_n
\]
which, of course, for $a=1/2$ reads as
\[
\left(\frac12\right)_{n+1}=\frac12\left(\frac32\right)_{n}
\]
equation (\ref{eics4}) can be written as
\begin{equation}\label{eics5}
\sum_{m=1}^\infty\frac{\left( 1/2\right) _{m}^2}{(m!)^2}\frac{s^{2m}}{2m}=\frac{s^2}{8}
\sum_{n=0}^\infty\frac{\left( 3/2\right) _{n}^2}{(n!)^2(n+1)^3}s^{2n}
\end{equation}
and, recalling the identity, which is an immediate consequence of the definition of Pochhammer's symbol,
\begin{equation}\label{eics6}
n+1=\frac{(2)_n}{(1)_n}=\frac{(2)_n}{n!},
\end{equation}
formula (\ref{eics}) follows from (\ref{eics6}), (\ref{eics5}), (\ref{eics4}) and (\ref{eics3}).
\end{proof}
\begin{remark}
The particular value $C(1)$ is indeed related to Catalan's constant $G.$ In fact, we have 
\begin{equation}\label{nick}
C(1)=\int_0^1\frac{\log\left(1+\sqrt{1-x^2}\right)}{\sqrt{1-x^2}}\,{\rm d}x=2G-\frac{\pi}{2}\,\log2.
\end{equation}
 We can achieve thus by changing the variable $x=\sin t$, so that
\begin{equation}\label{nick1}
C(1)=\int_0^{\frac{\pi}{2}}\log\left(1+\cos t\right)\,{\rm d}t
\end{equation}
and finally using equation (12) of \cite{Nick}. Notice that \eqref{nick} can be evaluated by Mathematica$_{\text{\textregistered}}.$ 
\end{remark}
\begin{remark}\label{duecin}
Since it is clear that from its definition, the parametric integral (\ref{eics3}) converges for $s\to1^{-}$; recalling (\ref{nick}) we have thus shown that
\begin{equation}\label{4effe3}
_{4}\mathrm{F}_{3}\left(  
\begin{array}{c}
1,1,3/2,3/2 \\[2mm]
2,2,2
\end{array}
;1\right)=16\log2-\frac{32}{\pi}G.
\end{equation}
Formula \eqref{4effe3} is a particular case, $n = 1$ therein, of a $_4F_3$ formula given at \cite[p. 819]{adamchik}, here we obtained it using a different method.
\end{remark}

Now, consider the parametric integral:
\begin{equation}\label{dds}
D(s):=\int_0^1\frac{\log\left(1-\sqrt{1-s^2x^2}\right)}{\sqrt{1-x^2}}\,{\rm d}x
\end{equation}
which is closely related to integral (\ref{cds}). Now, since
\begin{equation}\label{summa}
C(s)+D(s)=2\left(\int_0^1\frac{\log s}{\sqrt{1-x^2}}\,{\rm d}x-\int_0^1\frac{\log x}{\sqrt{1-x^2}}\,{\rm d}x\right)=\pi\log\frac{s}{2}
\end{equation}
integral $D(s)$ is also is evaluated in hypergeometric terms. As a matter of fact the same argument used to obtain (\ref{eics1}) leads to:
\begin{equation}\label{eids1}
D'(s)=\frac{1}{s}\left(\frac{\pi}{2}+K(s)\right).
\end{equation}
Formula (\ref{eids1}) enables us to obtain directly the hypergeometric representation of $D(s)$ without using (\ref{summa}).

\section{Applications of Theorem \ref{zeero}}
Theorem \ref{zeero} has three interesting consequences, that were pointed out by the anonymous Referee, 
to which we are grateful.  

\subsection{Relation to the Ramanujan-like series for $1/\pi$}

It turns out that the evaluation of the series presented in Remark \ref{duecin}, equation \eqref{4effe3} 
allows us to calculate in a more easy way the hypergeometric $_4{\rm F}_3(1)$ series 
\begin{equation}\label{campbell1}
 _4{\rm F}_3\left( 
\begin{array}{c}
1/2, 1/2, 1, 1 \\ 
2, 2, 2
\end{array}
; 1\right)=\sum_{n=0}^\infty\frac{\binom{2n}{n}^2}{16^n(n+1)^3}=16\log 2+\frac{48}{\pi}-32\, \frac{G}{\pi}-16.
\end{equation}
Series \eqref{campbell1} has recently found many applications in the evaluation of Ramanujan-like series for $1/\pi$ involving harmonic numbers as presented in the recent articles: \cite[Theorem 4.1]{campbell}, \cite[p. 636]{interplay2} and \cite[Theorem 5.12]{interplay}. 
In all the three papers the computation of \eqref{campbell1} is due to very sophisticated methods, while using our identity \eqref{4effe3} and some related summations we derive \eqref{campbell1} in a few steps. Notice that, as \cite{campbell} also remarks, Mathematica$_\text{\textregistered}$ is unable to evaluate the series \eqref{campbell1}.

We therefore provide our calculation of \eqref{campbell1}. Rewriting \eqref{4effe3} using the Pochhammer symbol, and then simplifying, using the gamma function for the terms $(3/2)_n$ and relation \eqref{eics6} to express $(2)_n$, we observe that \eqref{4effe3} can be writen as:
\begin{equation}\label{4effe3a}
16\log2-\frac{32}{\pi}G=\sum_{n=0}^\infty\frac{4}{\pi}\frac{\left(\Gamma(\frac32+n)\right)^2}{(n!)^2(n+1)^3}.
\end{equation}

Now we insert in \eqref{4effe3a} the nice identity connecting $\Gamma(\frac32+n)$ to the central binomial coefficient:
\[
\Gamma\left(\frac32+n\right)=\frac{(2n+1)\,n!\,\sqrt{\pi}}{2^{2n+1}}\,\binom{2n}{n}
\]
allowing to rewrite \eqref{4effe3a} and, so, \eqref{4effe3} as:
\begin{equation}\label{4effe3b}
16\log2-\frac{32}{\pi}G=\sum_{n=0}^\infty\frac{(2n+1)^2}{16^n(n+1)^3}\,\binom{2n}{n}^2.
\end{equation}
Formula \eqref{4effe3b} indicates the connection with \eqref{campbell1} simply by expanding $(2n+1)^2$:
\begin{equation}\label{4effe3c}
16\log2-\frac{32}{\pi}G=4\sum_{n=0}^\infty\frac{n}{16^n(n+1)^2}\,\binom{2n}{n}^2+\sum_{n=0}^\infty\frac{\binom{2n}{n}^2}{16^n(n+1)^3}.
\end{equation}
Henceforth formula \eqref{campbell1} follows form \eqref{4effe3c} showing that
\begin{equation}\label{4effe3d}
\sum_{n=0}^\infty\frac{4n}{16^n(n+1)^2}\,\binom{2n}{n}^2=16-\frac{48}{\pi }.
\end{equation}
Through the use of partial fraction decomposition we can write the left-hand side of \eqref{4effe3d} as
\begin{equation}\label{4effe3e}
\sum_{n=0}^\infty\frac{4}{(n+1)16^n}\,\binom{2n}{n}^2  -\sum_{n=0}^\infty\frac{4}{(n+1)^216^n}\,\binom{2n}{n}^2.
\end{equation}
To evaluate the first series in \eqref{4effe3e} we go backward expressing it as using the Gauss hypergeometric $_2{\rm F}_1.$ The first step is again in the representation of the central binomial coefficient
\[
\binom{2n}{n}=\frac{2^{2n}\,\left(\frac32\right)_n}{(2n+1)n!}=\frac{2^{2n}\,\left(\frac12\right)_n}{n!}.
\]
Notice that we used relation \eqref{idee}. 
Now use again \eqref{eics6} in order to express $(2)_n$ in terms of $n+1,$ thus we arrive at the evaluation
\[
\sum_{n=0}^\infty\frac{4}{(n+1)16^n}\,\binom{2n}{n}^2=4\sum_{n=0}^{\infty}\frac{\left(\frac12\right)_n^2}{(2)_n}\,\frac{1}{n!}=4\,_2{\rm F}_1\left( 
\begin{array}{c}
1/2, 1/2 \\ 
2
\end{array}
; 1\right)=4\,\frac{\Gamma(2)\Gamma(2-\frac12-\frac12)}{\Gamma(2-\frac12)\Gamma(2-\frac12)}=\frac{16}{\pi}.
\]
Notice that in the last step we employed Gauss theorem to evaluate $\,_2{\rm F}_1(1).$ The second series in \eqref{4effe3e} is evaluated in \cite[p.653]{interplay2} as
\[
\frac{64}{\pi}-16.
\]
Thus \eqref{4effe3d} holds true and this implies the statement \eqref{campbell1}. We observe that the second series in \eqref{4effe3e} can also be evaluated by an argument similar to the first.

\subsection{Family of hypergeometric identities}

Identity \eqref{eics} of Theorem \ref{zeero} is a starting point for a large family of hypergeometric identities. The idea is quite simple: multiply both sides of \eqref{eics} in Theorem \ref{zeero} by  a given function and then integrate over the interval $[0, 1].$ It is worth noting that Mathematica$_{\text{\textregistered}}$ is not able to recognize in closed form \eqref{pow1}, \eqref{pow2}, \eqref{pow3} below.

\begin{theorem}\label{zeerokon}

The following summation formulas hold true:
\begin{equation}\label{pow1}
_{4}\mathrm{F}_{3}\left(  
\begin{array}{c}
1,1,3/2,3/2 \\[2mm]
2,2,3
\end{array}
;1\right)=8+32 \log 2-64\,\frac{G}{\pi }-\frac{32}{\pi },
\end{equation}

\begin{equation}\label{pow2}
_{4}\mathrm{F}_{3}\left(  
\begin{array}{c}
1,1,3/2,3/2 \\[2mm]
2,3,3
\end{array}
;1\right)=96+64 \log 2-128\,\frac{G}{\pi }-\frac{320}{\pi },
\end{equation}

\begin{equation}\label{pow3}
_{4}\mathrm{F}_{3}\left(  
\begin{array}{c}
1,1,3/2,3/2 \\[2mm]
2,2,4
\end{array}
;1\right)=18+48 \log 2-96\,\frac{G}{\pi }-\frac{208}{3\,\pi }.
\end{equation}

\end{theorem}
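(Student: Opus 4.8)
The common engine is exactly the recipe announced before the statement: multiply the identity \eqref{eics} of Theorem \ref{zeero} by a weight $w(s)$ and integrate over $[0,1]$. Writing $F(s^2)={}_4\mathrm{F}_3(1,1,3/2,3/2;2,2,2;s^2)=\sum_{n\ge0}a_n s^{2n}$ with $a_n=\frac{(1)_n^2(3/2)_n^2}{(2)_n^3\,n!}$, the three target series have general terms $a_n\cdot\frac{2}{n+2}$, $a_n\cdot\frac{4}{(n+2)^2}$ and $a_n\cdot\frac{6}{(n+2)(n+3)}$, because passing from $(2)_n^3$ to $(2)_n^2(3)_n$, to $(2)_n(3)_n^2$, and to $(2)_n^2(4)_n$ merely inserts the factors $(2)_n/(3)_n=2/(n+2)$, its square, and $(2)_n/(4)_n=6/[(n+2)(n+3)]$, all read off from \eqref{eics6}. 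On the right-hand side the factor $s^2$ in \eqref{eics} turns term-by-term integration of $w(s)s^{2n+2}$ into precisely these factors for the choices
\[
w(s)=s,\qquad w(s)=-s\log s,\qquad w(s)=s-s^3,
\]
since $\int_0^1 s^{2n+3}\,{\rm d}s=\frac{1}{2(n+2)}$, $\int_0^1(-\log s)s^{2n+3}\,{\rm d}s=\frac{1}{4(n+2)^2}$ and $\int_0^1(s^{2n+3}-s^{2n+5})\,{\rm d}s=\frac{1}{2(n+2)(n+3)}$. Hence $w=s$ produces \eqref{pow1}, $w=-s\log s$ produces \eqref{pow2}, and $w=s-s^3$ produces \eqref{pow3}, each up to an explicit rational-times-$\pi$ prefactor coming from $-\frac{\pi}{16}$ and from $\frac{\pi}{2}\log2\int_0^1 w\,{\rm d}s$.

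It remains to evaluate the left-hand sides $\int_0^1 w(s)\,C(s)\,{\rm d}s$ in closed form, and crucially this must be done \emph{without} re-expanding the series of $C$, or the argument would be circular. The cleanest device is integration by parts with $W(s)=\int_0^s w$: the boundary term at $0$ vanishes, so $\int_0^1 wC\,{\rm d}s=W(1)C(1)-\int_0^1 W(s)\,C'(s)\,{\rm d}s$, and I then insert the two facts already proved in this paper, namely $C'(s)=\frac1s\!\left(\frac\pi2-K(s)\right)$ from \eqref{eics1} and $C(1)=2G-\frac\pi2\log2$ from \eqref{nick}. This collapses every left-hand side to the known constant $C(1)$, an elementary integral $\int_0^1 W(s)/s\,{\rm d}s$, and weighted elliptic moments $\int_0^1\frac{W(s)}{s}K(s)\,{\rm d}s$. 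For the three weights the last ingredient reduces to the odd moments $\int_0^1 sK(s)\,{\rm d}s$ and $\int_0^1 s^3K(s)\,{\rm d}s$ together with the logarithmic moment $\int_0^1 s\log s\,K(s)\,{\rm d}s$.

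I would compute these moments from $K(s)=\frac\pi2\,{}_2\mathrm{F}_1(1/2,1/2;1;s^2)$ by integrating term by term. The odd power moments become ${}_2\mathrm{F}_1(1)$ values summable by Gauss's theorem: directly $\int_0^1 sK\,{\rm d}s=\frac\pi4\,{}_2\mathrm{F}_1(1/2,1/2;2;1)=1$, and after the split $\frac{1}{m+2}=\frac{1}{m+1}-\frac{1}{(m+1)(m+2)}$ two applications of Gauss give $\int_0^1 s^3K\,{\rm d}s=\frac59$, both rational. The logarithmic weight is the one genuinely new input: $\int_0^1 s\log s\,K(s)\,{\rm d}s=-\frac\pi8\sum_{n\ge0}\frac{\binom{2n}{n}^2}{16^n(n+1)^2}=-\frac\pi8\bigl(\frac{16}\pi-4\bigr)=\frac\pi2-2$, where the non-classical sum is exactly the second series evaluated in \eqref{4effe3e}. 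Substituting these values, together with the elementary $\int_0^1 W/s\,{\rm d}s$, and matching against the prefactors of the first paragraph, yields \eqref{pow1}, \eqref{pow2} and \eqref{pow3}.

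The main obstacle is concentrated in \eqref{pow2}: the extra $\log s$ in the weight is what forces the moment $\int_0^1 s\log s\,K(s)\,{\rm d}s$, hence the sum $\sum\binom{2n}{n}^2/[16^n(n+1)^2]$, which is \emph{not} summable by the Gauss/Saalsch\"utz machinery and has to be imported from the preceding section. A tempting alternative, staying strictly inside the parametric-integral philosophy, is to keep $\int_0^1 wC\,{\rm d}s$ as a double integral, apply Fubini, and use $v=\sqrt{1-s^2x^2}$ to reduce the inner $s$-integral to elementary antiderivatives of $v^k\log(1+v)$; this works transparently for $w=s$ (the $x$-integral then collapses, after $x=\sin t$, to $\int_0^{\pi/2}\log(1+\cos t)\,{\rm d}t=2G-\frac\pi2\log2$), but for $w=-s\log s$ it throws up dilogarithmic intermediate terms that must be shown to cancel, which is precisely the difficulty the integration-by-parts route sidesteps. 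The residual risk is purely bookkeeping: tracking the rational, $\pi$, $\log2$ and $G$ contributions and confirming the rational values of the odd moments $\int_0^1 s^{2k+1}K\,{\rm d}s$.
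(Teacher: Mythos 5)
Your proposal is correct --- I checked every ingredient and the final assembly does yield \eqref{pow1}, \eqref{pow2} and \eqref{pow3} --- and while its outer engine coincides with the paper's (multiply \eqref{eics} by a weight, integrate over $[0,1]$, and identify the series side with the target $_{4}\mathrm{F}_{3}$), your treatment of the left-hand sides is genuinely different. The paper applies Fubini, evaluates the inner $s$-integral in elementary closed form, and then integrates in $x$, with \eqref{nick} as the only transcendental input; note that this works even for the logarithmic weight, where the dilogarithmic terms you worry about do cancel, as shown by the elementary integrand in \eqref{ls23b}. You instead integrate by parts in $s$, reducing each case to $C(1)$ from \eqref{nick}, the derivative formula \eqref{eics1}, and three moments of $K$: $\int_0^1 sK\,{\rm d}s=1$ and $\int_0^1 s^3K\,{\rm d}s=\frac{5}{9}$ (both correctly obtained via Gauss's theorem) plus the logarithmic moment $\int_0^1 s\log s\,K(s)\,{\rm d}s=\frac{\pi}{2}-2$, which you import through $\sum_{n\geq0}\binom{2n}{n}^2/\bigl(16^n(n+1)^2\bigr)=\frac{16}{\pi}-4$, the series used at \eqref{4effe3e}; since that evaluation comes from \cite{interplay2} and is independent of Theorem \ref{zeerokon}, there is no circularity. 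Your route buys uniformity --- all three weights go through the same $K$-moment machinery --- at the price of that one extra, genuinely non-Gaussian input; the paper's route needs nothing beyond \eqref{nick} but costs a separate, fairly laborious, elementary double integral for each weight. Two details where your version is actually sharper than the paper's text: (i) your weight $s-s^3$ for \eqref{pow3} is the right choice, because with the paper's weight $s^3$ alone one gets $\sum_n a_n/(n+3)$ (writing $\sum_n a_n s^{2n}$ for the $_{4}\mathrm{F}_{3}$ appearing in \eqref{eics}), which is a $_{5}\mathrm{F}_{4}(1)$ value and not a rational multiple of the target; the paper's sketch thus implicitly needs the partial fraction $\frac{1}{(n+2)(n+3)}=\frac{1}{n+2}-\frac{1}{n+3}$, i.e.\ a combination with the weight-$s$ case, which $w=s-s^3$ builds in automatically; (ii) your normalizations are the correct ones: for $w=s$ the series side is $\frac{\pi}{4}\log2-\frac{\pi}{64}\,{}_{4}\mathrm{F}_{3}\left(1,1,3/2,3/2;2,2,3;1\right)$, which, equated with $\frac12-\frac{\pi}{8}+G-\frac{\pi}{4}\log2$ (your value, agreeing with \eqref{ls23}), gives \eqref{pow1} exactly, whereas the prefactors displayed in \eqref{rs23} and \eqref{rs23b} carry a factor-$2$ slip and a sign slip respectively --- slips which do not propagate to the statement of the theorem.
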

\begin{proof}
We start with \eqref{pow1}. Multiplying by $s$ both sides of \eqref{eics} and integrating for $s\in[0,1]$, at right-hand side we get, after changing variable $s^2=\sigma$:
\begin{align}\nonumber
&\frac{\pi}{4}\,\log 2-\frac{\pi}{16}\,\int_0^1 s^3\,_{4}\mathrm{F}_{3}\left(  
\begin{array}{c}
1,1,3/2,3/2 \\[2mm]
2,2,2
\end{array}
;s^2\right)\,{\rm d}s\\
=&\frac{\pi}{4}\,\log 2-\frac{\pi}{32}\,\int_0^1 \sigma\,_{4}\mathrm{F}_{3}\left(  
\begin{array}{c}
1,1,3/2,3/2 \\[2mm]
2,2,2
\end{array}
;\sigma\right)\,{\rm d}\sigma\nonumber
\\
=&\frac{\pi}{4}\,\log 2-\frac{\pi}{32}\,_{4}\mathrm{F}_{3}\left(  
\begin{array}{c}
1,1,3/2,3/2 \\[2mm]
2,2,3
\end{array}
;1\right).\label{rs23}
\end{align}
Now, integrating the left-hand side of \eqref{eics} in the relevant double integral thus obtained the order of integration can be switched, and the closed-form evaluation for the resultant integral requires some work, but is feasible in terms of elementary functions:
\begin{align}
&\int_0^1 s\left(\int_0^1\frac{\log\left(1+\sqrt{1-s^2x^2}\right)}{\sqrt{1-x^2}}\,{\rm d}x\right)\,{\rm d}s\nonumber\\
=&\int_0^1\frac{1}{\sqrt{1-x^2}}\left(\int_0^1s\,\log\left(1+\sqrt{1-s^2x^2}\,\right)\,{\rm d}s\,\right)\,{\rm d}x\nonumber\\
=&\int_0^1\frac{1}{\sqrt{1-x^2}}\left(\frac{2+x^2-2 \sqrt{1-x^2}}{4 x^2}+\frac{1}{2} \log \left(1+\sqrt{1-x^2}\right)\right)\,{\rm d}x\nonumber\\
=&\left[\frac{1}{4} \left(\frac{2-2 \sqrt{1-x^2}}{x}-\arcsin x \right)\right]_0^1+\frac12\,\int_0^1\frac{\log \left(1+\sqrt{1-x^2}\right)}{\sqrt{1-x^2}}\,{\rm d}x\nonumber\\
=&\frac12-\frac\pi8+G-\frac{\pi}{4} \,  \log 2.\label{ls23}
\end{align}
But \eqref{rs23} and \eqref{ls23} represent the same real number, hence \eqref{pow1} follows. 

For \eqref{pow2} and \eqref{pow3} the idea is the same: to get \eqref{pow2} multiplying both sides of \eqref{eics} by $s\ln s$ the left-hand side of \eqref{eics} provides, exchanging the order of integration:
\begin{align}
&\int_0^1 s\log s\left(\int_0^1\frac{\log\left(1+\sqrt{1-s^2x^2}\right)}{\sqrt{1-x^2}}\,{\rm d}x\right)\,{\rm d}s\nonumber\\
=&\int_0^1\left(\frac{x^2+3 \sqrt{1-x^2}-3+2 \log 2}{4 x^2 \sqrt{1-x^2}}-\frac{\left(x^2+2\right) \log \left(\sqrt{1-x^2}+1\right)}{4 x^2 \sqrt{1-x^2}}\right)\,{\rm d}x\nonumber\\
=&\frac{3 \pi }{8}+\frac{\pi}{8}\,\log2-\frac{G}{2}-\frac{5}{4}\label{ls23b}.
\end{align}

At right-hand side of \eqref{eics}, after multiplication by $s\ln s$ we obtain, after changing the variable $s^2=\sigma$ and then integrating by part, looking for the primitive of the hypergeometric function:
\begin{align}\nonumber
&-\frac{1}{8} \pi  \log 2-\frac{\pi}{16}\,\int_0^1 s^3\,\log s\,_{4}\mathrm{F}_{3}\left(  
\begin{array}{c}
1,1,3/2,3/2 \\[2mm]
2,2,2
\end{array}
;s^2\right)\,{\rm d}s\\
=&-\frac{1}{8} \pi  \log 2-\frac{\pi}{64}\,\int_0^1 \sigma\,\log\sigma \,_{4}\mathrm{F}_{3}\left(  
\begin{array}{c}
1,1,3/2,3/2 \\[2mm]
2,2,2
\end{array}
;\sigma\right)\,{\rm d}\sigma\nonumber
\\
=&-\frac{1}{8} \pi  \log 2-\frac{\pi}{256}\,_{4}\mathrm{F}_{3}\left(  
\begin{array}{c}
1,1,3/2,3/2 \\[2mm]
2,3,3
\end{array}
;1\right).\label{rs23b}
\end{align}
Notice that, integrating by series we used the relation $(3)_n=\dfrac{n+2}{2}(2)_n\,.$ Identity \eqref{pow2} then follows equating \eqref{ls23b} and \eqref{rs23b}.

To prove \eqref{pow3} we can multiply by $s^3$ both sides of  \eqref{eics} and repeat the argument, using, when integrating the hypergeometric series, the Pochhammer identity
\[
(4)_n=\frac{n+3}{3}(3)_n=\frac{n+3}{3}\frac{n+2}{2}\,\,(2)_n.
\] 
\end{proof}

\section{Catalan's constant from the complete elliptic integral of the second kind}\label{tre}
The parametric integral leading to the elliptic integral of the second kind is a slightly more intricate; nevertheless, we have:
\begin{theorem}
The following formula for $|s|<1$ holds:
\begin{equation}\label{e2}
\int_0^1\frac{\arcsin(sx)+sx\sqrt{1-s^2x^2}}{2x\sqrt{1-x^2}}{\rm d}x=\frac{\pi}{2}\,s\,_{3}\mathrm{F}_{2}\left(  
\begin{array}{c}
-1/2,1/2,1/2 \\[2mm]
1,3/2
\end{array}
;s^2\right).
\end{equation}
\end{theorem}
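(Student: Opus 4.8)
The plan is to mirror exactly the strategy used for Theorem \ref{uno}, replacing $K$ by $E$. First I would name the left-hand side
\[
B(s):=\int_0^1\frac{\arcsin(sx)+sx\sqrt{1-s^2x^2}}{2x\sqrt{1-x^2}}\,{\rm d}x,
\]
and check that the integrand is dominated near the only delicate point $x=0$: since $\arcsin(sx)\sim sx$ and $sx\sqrt{1-s^2x^2}\sim sx$ as $x\to0$, the numerator behaves like $2sx$, so the integrand is bounded by an integrable multiple of $1/\sqrt{1-x^2}$ for $|s|<1$. This legitimises differentiation under the integral sign, exactly as in the proof of Theorem \ref{uno}.

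The crux of the whole argument is the computation of $B'(s)$, and this is where the seemingly artificial numerator reveals its purpose. Differentiating the numerator with respect to $s$, the $\arcsin$ term contributes $x/\sqrt{1-s^2x^2}$, while the product term contributes $x\sqrt{1-s^2x^2}-s^2x^3/\sqrt{1-s^2x^2}$. Combining the two pieces carried by $1/\sqrt{1-s^2x^2}$ gives $x(1-s^2x^2)/\sqrt{1-s^2x^2}=x\sqrt{1-s^2x^2}$, so the total $s$-derivative of the numerator collapses to $2x\sqrt{1-s^2x^2}$. Dividing by the $2x\sqrt{1-x^2}$ in the denominator, the $2x$ cancels and I obtain
\[
B'(s)=\int_0^1\sqrt{\frac{1-s^2x^2}{1-x^2}}\,{\rm d}x=E(s).
\]
I expect this algebraic collapse to be the main obstacle, or rather the main point: the integrand in \eqref{e2} is engineered precisely so that the antiderivative in $s$ of $E(s)$ reproduces it. Everything else is routine once this identity is in hand.

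With $B(0)=0$ (the numerator vanishes at $s=0$), I would then write $B(s)=\int_0^s E(k)\,{\rm d}k$ and integrate term by term using the Gauss-series representation \eqref{repre}, namely $E(k)=\tfrac{\pi}{2}\,{}_2{\rm F}_1(-1/2,1/2;1;k^2)$. Integrating $k^{2m}$ over $[0,s]$ produces the factor $s^{2m+1}/(2m+1)$, so that
\[
B(s)=\frac{\pi}{2}\sum_{m=0}^{\infty}\frac{(-1/2)_m(1/2)_m}{(1)_m\,(2m+1)}\frac{s^{2m+1}}{m!}.
\]
Finally I would absorb the factor $1/(2m+1)$ using the same identity \eqref{idee}, $(1/2)_m/(3/2)_m=1/(1+2m)$, which converts the denominator $(2m+1)$ into an extra numerator $(1/2)_m$ and an extra denominator $(3/2)_m$. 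Pulling out $s$ and recognising the resulting series as a ${}_3{\rm F}_2$ in $s^2$ yields precisely the right-hand side of \eqref{e2}, completing the proof.
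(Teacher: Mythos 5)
Your proposal is correct and follows essentially the same route as the paper's own proof: differentiate under the integral sign to obtain $B'(s)=E(s)$, use $B(0)=0$ to write $B(s)=\int_0^s E(k)\,{\rm d}k$, and integrate the Gauss series \eqref{repre} term by term, absorbing $1/(2m+1)$ via \eqref{idee}. In fact you supply two details the paper leaves implicit — the domination estimate near $x=0$ and the explicit algebraic collapse of the differentiated numerator to $2x\sqrt{1-s^2x^2}$ — so nothing is missing.
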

\begin{remark}
Formula \eqref{e2}, unlike many of the previous formulas given in this article, has not previously appeared in the literature.
\end{remark}

\begin{proof}
This time we put
\[
B(s)=\int_0^1\frac{\arcsin(sx)+sx\sqrt{1-s^2x^2}}{2x\sqrt{1-x^2}}{\rm d}x
\]
in such a way
\[
B'(s)=\int_0^1\sqrt{\frac{1-s^2x^2}{1-x^2}}\,{\rm d}x=E(s).
\]
Then recalling that $F(0)=0$, we arrive at
\begin{align}\label{ant2}
B(s)=\int_0^sE(k){\rm d}k=\frac{\pi}{2}\int_{0}^{s}\sum_{m=0}^{\infty }\frac{\left(- 1/2\right) _{m}\left(
1/2\right) _{m}}{\left( 1\right) _{m}}\frac{k^{2m}}{m!}{\rm d}k,
\end{align}
where we used the hypergeometric representation of $E(k)$ \eqref{repre}.
Our statement then follows from the integration by series as in Theorem \ref{uno}.
\end{proof}
\begin{remark}
The particular $_{3}\mathrm{F}_{2}$ specification and its relations to $G$ and the complete ellpitic integral of second kind in the following formula \eqref{3effeddue}, was also considered in \cite[p. 7 eq. 20]{adamchik}.
\end{remark}

\begin{corollary}
\begin{equation}\label{3effeddue}
\frac{1}{2}+G=\frac{\pi}{2}\,_{3}\mathrm{F}_{2}\left(  
\begin{array}{c}
-1/2,1/2,1/2 \\[2mm]
1,3/2
\end{array}
;1\right).
\end{equation}
\end{corollary}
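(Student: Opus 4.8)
The plan is to specialize the preceding Theorem, that is identity \eqref{e2}, to the endpoint by letting $s\to 1^{-}$. On the right-hand side of \eqref{e2} the prefactor $\frac{\pi}{2}s$ tends to $\frac{\pi}{2}$, so the whole argument reduces to two one-sided limit computations: passing the limit through the $_{3}\mathrm{F}_{2}$ series on the right, and evaluating the left-hand side at $s=1$. Both limits, once justified, combine to give \eqref{3effeddue}.

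First I would check that the series on the right converges at the boundary value $s^2=1$. With upper parameters $-1/2,1/2,1/2$ and lower parameters $1,3/2$ one has $b_1+b_2-a_1-a_2-a_3=1+\tfrac32+\tfrac12-\tfrac12-\tfrac12=2>0$, so by the convergence criterion recalled at the beginning of Section \ref{due} the value of the hypergeometric function at $1$ is finite. Consequently Abel's theorem for power series applies and yields
\[
\lim_{s\to1^{-}}\frac{\pi}{2}\,s\,{}_{3}\mathrm{F}_{2}\!\left(\begin{array}{c}-1/2,1/2,1/2\\ 1,3/2\end{array};s^2\right)=\frac{\pi}{2}\,{}_{3}\mathrm{F}_{2}\!\left(\begin{array}{c}-1/2,1/2,1/2\\ 1,3/2\end{array};1\right),
\]
which is precisely the right-hand side of \eqref{3effeddue}.

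Next I would evaluate the left-hand side. Rather than take the limit of the explicit integrand, I would use the representation established in the proof of the Theorem, namely $B(s)=\int_0^s E(k)\,{\rm d}k$. Since $E$ is continuous and bounded on $[0,1]$, the map $s\mapsto\int_0^s E(k)\,{\rm d}k$ is continuous up to $s=1$, whence $\lim_{s\to1^{-}}B(s)=\int_0^1 E(k)\,{\rm d}k$. The only external ingredient then needed is the relation recalled in the Introduction, $G=\int_0^1 E(s)\,{\rm d}s-\tfrac12$, equivalently $\int_0^1 E(k)\,{\rm d}k=G+\tfrac12$. Equating the two boundary values produces $G+\tfrac12=\frac{\pi}{2}\,{}_{3}\mathrm{F}_{2}(\cdots;1)$, which is \eqref{3effeddue}.

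The routine parts are the two limit evaluations and the recollection of the closed form $\int_0^1 E=G+\tfrac12$; the continuity of an integral with respect to its upper endpoint handles the left side with no effort. The one point that genuinely deserves care, and which I regard as the main obstacle, is the justification of the boundary passage on the series side, i.e. verifying convergence at $s^2=1$ so that Abel's theorem legitimately transfers the identity from $|s|<1$ to the endpoint $s=1$.
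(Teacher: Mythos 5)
Your proposal is correct and follows essentially the same route as the paper: the paper also takes $s\to1^{-}$ in \eqref{e2} and identifies $B(1)=\int_0^1 E(k)\,{\rm d}k=\tfrac12+G$ (citing the table of Byrd--Friedman rather than the relation recalled in the Introduction). Your added justifications --- the convergence check $b_1+b_2-a_1-a_2-a_3=2>0$ and the appeal to Abel's theorem for the boundary passage --- are details the paper leaves implicit, not a different method.
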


\begin{proof}
From \eqref{ant2} and \cite[entry 615.01 p. 274]{Byrd}, we know that 
\begin{equation}
B(1)=\int_0^1\frac{\arcsin(x)+x\sqrt{1-x^2}}{2x\sqrt{1-x^2}}{\rm d}x=\frac12+G.
\end{equation} 
Formula (\ref{3effeddue}) stems from (\ref{e2}) taking the limit $s\to1^{-}$.
\end{proof}

\section*{Conclusion}

We have investigated the relationships between Catalan's constant and the $_3{\rm F}_2$ and $_4{\rm F}_3$ hypergeometric functions, equations (\ref{A(1)}), (\ref{4effe3}) and (\ref{3effeddue}), deriving them from some parametric integrals. We believe that our approach can be used to find further identities involving other math constants. Our approach is focused on investigation of the hypergeometric nature of $G$, following what was
first highlighted by Ramanujan, rather than following the example of many recent contributions and searching for particular numerical series increasing the speed of approximation of $G$.


\begin{thebibliography}{10}

\bibitem{adamchik}
Victor~S. Adamchik.
\newblock A certain series associated with {C}atalan's constant.
\newblock {\em Zeitschrift f{\"u}r Analysis und ihre Anwendungen},
  21(3):817--826, 2002.

\bibitem{alkan}
Emre Alkan.
\newblock Series representations in the spirit of {R}amanujan.
\newblock {\em Journal of Mathematical Analysis and Applications},
  410(1):11--26, 2014.

\bibitem{erdely}
Harry Bateman and Arthur Erd\'elyi.
\newblock {\em Higher transcendental functions. Vol. 1}.
\newblock Mc Graw Hill, New York, 1953.

\bibitem{berndt1}
Bruce~C. Berndt.
\newblock {\em Ramanujan's notebooks {P}art I}.
\newblock Springer, New York, 1985.

\bibitem{berndt}
Bruce~C. Berndt.
\newblock {\em Ramanujan's notebooks {P}art II}.
\newblock Springer, New York, 1989.

\bibitem{borwein}
David Borwein, Jonathan~M. Borwein, M.~Laurence Glasser, and James~G.F. Wan.
\newblock Moments of {R}amanujan's generalized elliptic integrals and
  extensions of {C}atalan's constant.
\newblock {\em Journal of Mathematical Analysis and Applications},
  384(2):478--496, 2011.

\bibitem{bradley}
David~M. Bradley.
\newblock A class of series acceleration formulae for {C}atalan's constant.
\newblock {\em The Ramanujan Journal}, 3(2):159--173, 1999.

\bibitem{Byrd}
Paul~F. Byrd and Morris~D. Friedman.
\newblock {\em Handbook of elliptic integrals for engineers and physicists}.
\newblock Springer, New York, 1967.

\bibitem{campbell}
John Campbell.
\newblock New series involving harmonic numbers and squared central binomial
  coefficients.
\newblock {\em Rocky Mountain J. Math.}, 49(8):2513--2544, 2019.

\bibitem{interplay}
John~M. Campbell, Jacopo D'Aurizio, and Jonathan Sondow.
\newblock On the interplay among hypergeometric functions, complete elliptic
  integrals, and {F}ourier-{L}egendre series expansions.
\newblock {\em Journal of Mathematical Analysis and Applications}, 479:90--121,
  2019.

\bibitem{interplay2}
Marco Cantarini and Jacopo D'Aurizio.
\newblock On the interplay betwenn hypergeometric series, {F}ourier-{L}egendre
  series expansions and {E}uler sums.
\newblock {\em Bollettino dell'Unione Matematica Italiana}, 12:623--656, 2019.

\bibitem{carlson}
Bille~C. Carlson.
\newblock {\em Special functions of applied mathematics}.
\newblock Academic Press, 1977.

\bibitem{catalan0}
Eug\`{e}ne Catalan.
\newblock M\'{e}moire sur la transformation des s\'{e}ries et sur quelques
  int{\'e}grales d{\'e}finies.
\newblock {\em M{\'e}m. Acad. R. Sci. Belgique}, 33(1):1--50, 1865.

\bibitem{catalan}
Eug\`{e}ne Catalan.
\newblock Recherches sur la constant {$G$}, et sur les integrales euleriennes.
\newblock {\em M\'emoires de l'Academie imperiale des sciences de
  Saint-P\'etersbourg}, 31(3):1--51, 1883.

\bibitem{choi}
Junesang Choi.
\newblock Some integral representations of the {C}lausen function {$Cl_2(x)$}
  and the {C}atalan constant {$G$}.
\newblock {\em East Asian Math J}, 32:43--46, 2016.

\bibitem{rober}
Joseph Edward.
\newblock {\em A treatise on the integral calculus, Vol. II}.
\newblock Chelsea Publishing Company, New York, 1922.

\bibitem{Glaisher}
James~W.L. Glaisher.
\newblock On a numerical continued product.
\newblock {\em Messenger of Mathematics}, 6:71--76, 1877.

\bibitem{gradshteyn}
Izrail~Solomonovich Gradshteyn and Iosif~Moiseevich Ryzhik.
\newblock {\em Table of integrals, series, and products}.
\newblock Academic press, 2014.

\bibitem{Nick}
Graham Jameson and Nick Lord.
\newblock Integrals evaluated in terms of {C}atalan's constant.
\newblock {\em The Mathematical Gazette}, 101(550):38--49, 2017.

\bibitem{lima}
Fabio~M.S. Lima.
\newblock A rapidly converging {R}amanujan-type series for {C}atalan's
  constant.
\newblock {\em arXiv:1207.3139}, 2012.

\bibitem{nahin}
Paul~J. Nahin.
\newblock {\em Inside interesting integrals}.
\newblock Springer, 2015.

\bibitem{deriving}
Amrik~S. Nimbran.
\newblock Deriving {F}orsyth-{G}laisher type series for $\frac{1}{\pi}$
  {C}atalan's constant by an elementary method.
\newblock {\em The Mathematics Student}, 84(1--2):69--84, 2015.

\bibitem{integral}
Srinirvasa Ramanujan.
\newblock On the integral $\displaystyle{\int_0^x\frac{\tan^{-1}t}{t}dt}$.
\newblock {\em Journal of the Indian Mathematical Society}, 7:93--96, 1915.

\bibitem{slater}
Lucy~J. Slater.
\newblock {\em Generalized hypergeometric functions}.
\newblock Cambridge University Press, 1966.

\bibitem{sriva}
Hari~M. Srivastava and Elizabeth~A. Miller.
\newblock A simple reducible case of double hypergeometric series involving
  {C}atalan's constant and {R}iemann's $\zeta$-function.
\newblock {\em International Journal of Mathematical Education in Science and
  Technology}, 21(3):375--377, 1990.

\bibitem{yang}
Sheldon Yang.
\newblock Some properties of {C}atalan's constant {$G$}.
\newblock {\em International Journal of Mathematical Education in Science and
  Technology}, 23(4):549--556, 1992.

\end{thebibliography}
\end{document}